\newtheorem{theorem}{Theorem}[section]
\newtheorem{proposition}[theorem]{Proposition}
\newtheorem{lemma}[theorem]{Lemma}
\newtheorem{remark}[theorem]{Remark}
\numberwithin{equation}{section}
\begin{document}

\author{David Windisch}\title{\normalsize \textbf{Random walk on a discrete torus and random interlacements}}
\author{\normalsize David Windisch}
\date{}
\maketitle

\thispagestyle{empty}

\begin{abstract}
We investigate the relation between the local picture left by the trajectory of a simple random walk on the torus $({\mathbb Z} / N{\mathbb Z})^d$, $d \geq 3$, until $uN^d$ time steps, $u > 0$, and the model of random interlacements recently introduced by Sznitman \cite{int}. In particular, we show that for large $N$, the joint distribution of the local pictures in the neighborhoods of finitely many distant points left by the walk up to time $uN^d$
converges to independent copies of the random interlacement at level $u$.
\end{abstract}

\vspace{60 mm}

\noindent Departement Mathematik \hspace{85 mm} February 2008\\
ETH Z\"urich \\
CH-8092 Z\"urich \\
Switzerland \\

\newpage

\setcounter{page}{1}

\section{Introduction}

The object of a recent article by Benjamini and Sznitman \cite{BS07} was to investigate the vacant set left by a simple random walk on the $d \geq 3$-dimensional discrete torus of large side-length $N$ up to times of order $N^d$. The aim of the present work is to study the connections between the microscopic structure of this set and the model of random interlacements introduced by Sznitman in \cite{int}. Similar questions have also recently been considered in the context of random walk on a discrete cylinder with a large base, see \cite{S08}. 

In the terminology of \cite{int}, the interlacement at level $u \geq 0$ is the trace left on ${\mathbb Z}^d$ by a cloud of paths constituting a Poisson point process on the space of doubly infinite trajectories modulo time-shift, tending to infinity at positive and negative infinite times. The parameter $u$ is a multiplicative factor of the intensity measure of this point process. The interlacement at level $u$ is an infinite connected random subset of ${\mathbb Z}^d$, ergodic under translation. Its complement is the so-called vacant set at level $u$. In this work, we consider the distribution of the local pictures of the trajectory of the random walk on $({\mathbb Z}/N{\mathbb Z})^d$ running up to time $uN^d$ in the neighborhood of finitely many points with diverging mutual distance as $N$ tends to infinity. We show that the distribution of these sets converges to the distribution of independent random interlacements at level $u$. 

\vspace{5mm}
In order to give the precise statement, we introduce some notation. For $N \geq 1$, we consider the integer torus
\begin{align}
{\mathbb T} = ({\mathbb Z}/N{\mathbb Z})^d, \quad d \geq 3. \label{def:torus}
\end{align}
We denote with $P_x$, $x \in {\mathbb T}$, resp.~$P$, the canonical law on ${\mathbb T}^{\mathbb N}$ of simple random walk on $\mathbb T$ starting at $x$, resp.~starting with the uniform distribution $\nu$ on $\mathbb T$. The corresponding expectations are denoted by $E_x$ and $E$, the canonical process by $X_.$. Given $x \in {\mathbb T}$, the vacant configuration left by the walk in the neighborhood of $x$ at time $t \geq 0$ is the $\{0,1\}^{{\mathbb Z}^d}$-valued random variable
\begin{align}
\omega_{x,t}(.) = 1 \{ X_m \neq \pi_{\mathbb T}(.) + x, \textrm{ for all } 0 \leq m \leq [t] \}, \label{def:vconf}
\end{align}
where $\pi_{\mathbb T}$ denotes the canonical projection from ${\mathbb Z}^d$ onto $\mathbb T$. With (2.16) of \cite{int}, the law ${\mathbb Q}_u$ on $\{0,1\}^{{\mathbb Z}^d}$ of the indicator function of the vacant set at level $u \geq 0$ is characterized by the property
\begin{align}
{\mathbb Q}_u [\omega(x) = 1, \textrm{ for all } x \in K ] = \exp \{-u {\textup{ cap}(K)} \}, \textrm{ for all finite sets $K \subseteq {\mathbb Z}^d$,}\label{eq:int}
\end{align}
where $\omega(x)$, $x \in {\mathbb Z}^d$, are the canonical coordinates on $\{0,1\}^{{\mathbb Z}^d}$, and $\textup{cap}(K)$ the capacity of $K$, see (\ref{def:cap}) below. In this note, we show that the joint distribution of the vacant configurations in $M \geq 1$ distinct neighborhoods of distant points $x_1, \ldots, x_M$ at time $uN^d$ tends to the distribution of $M$ vacant sets of independent random interlacements at level $u$. This result has a similar flavor to Theorem 0.1 in \cite{S08}, which was proved in the context of random walk on a discrete cylinder.

\begin{theorem} \label{thm:main}
\textup{($u>0$, $d \geq 3$)} \newline
Consider $M \geq 1$ and for each $N \geq 1$, $x_1, \ldots, x_M$ points in $\mathbb T$ such that
\begin{align}
&\lim_N \inf_{1 \leq i \neq j \leq M} |x_i - x_j|_\infty = \infty.  \textrm{ Then} \label{thm1} \\ 
&\textrm{$(\omega_{x_1,uN^d}, \ldots , \omega_{x_M, uN^d})$ converges in distribution} \textrm{ to ${\mathbb Q}_u^{\otimes M}$ under $P$, as $N \to \infty$.} \label{thm2}
\end{align}
\end{theorem}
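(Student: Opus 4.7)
By (\ref{eq:int}) and inclusion--exclusion on cylinder events, ${\mathbb Q}_u^{\otimes M}$ is the unique probability measure on $(\{0,1\}^{{\mathbb Z}^d})^M$ whose values at events of the form $\{\omega_i(y)=1 \text{ for all } y \in K_i,\, 1 \le i \le M\}$, for finite $K_1,\ldots,K_M \subseteq {\mathbb Z}^d$, equal $\exp(-u\sum_i \textup{cap}(K_i))$. Since $(\{0,1\}^{{\mathbb Z}^d})^M$ is a compact product space, (\ref{thm2}) is equivalent to showing that for every such family,
\begin{align*}
P[X_m \notin A_N \text{ for all } 0 \le m \le [uN^d]] \longrightarrow \exp\Bigl(-u\sum_{i=1}^M \textup{cap}(K_i)\Bigr),
\end{align*}
where $A_N := \bigcup_{i=1}^M(\pi_{\mathbb T}(K_i)+x_i) \subseteq {\mathbb T}$. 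The first step of the proof would be to check that by (\ref{thm1}) and the boundedness of the $K_i$, for $N$ large the translates $\pi_{\mathbb T}(K_i)+x_i$ are pairwise disjoint in ${\mathbb T}$ and $\pi_{\mathbb T}$ is injective on each $K_i$, so $|A_N|$ remains bounded in $N$.

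The main argument I would give is a coupling of the walk's excursions into small neighborhoods of the $x_i$ with the trajectories appearing in the interlacement point process. Pick intermediate scales $1\ll s_N \ll L_N$ with $L_N \ll \min_{i\neq j}|x_i-x_j|_\infty \wedge N$; set $\bar B_i := x_i+B(0,s_N)$ and $B_i := x_i+B(0,L_N)$ in ${\mathbb T}$, pairwise disjoint for large $N$. Decompose $X_{[0,uN^d]}$ into the successive segments between visits to $\bigcup_i \bar B_i$ and the subsequent exits from $\bigcup_i B_i$. Using the sharp mixing estimate for SRW on ${\mathbb T}$ (mixing time of order $N^2\log N$) together with the fact that in dimension $d\ge 3$ the expected time between successive entries into $\bigcup_i \bar B_i$ is $\Theta(N^d/s_N^{d-2})$, one expects the walk to essentially equilibrate between excursions, so the entry points of successive excursions are asymptotically i.i.d.\ with law close to the normalized equilibrium measure on $\bigcup_i \bar B_i$. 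Inside each $B_i$, the excursion path could be coupled, via Green-function and local CLT estimates on ${\mathbb T}$, with a simple random walk on ${\mathbb Z}^d$ entering $B(0,s_N)$ from the harmonic measure on $\partial B(0,L_N)$, which is precisely the forward-trajectory law appearing in the interlacement construction of \cite{int}.

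Assembling these approximations should yield a Poisson-type limit: the number of excursions visiting $\bar B_i$ converges (jointly over $i$) to independent Poisson variables of mean $u\,\textup{cap}(\bar B_i)$, and, conditionally on entering $\bar B_i$, each excursion's trace on $K_i+x_i$ matches, in distribution, that of a single interlacement trajectory visiting the corresponding translate. The conditional avoid-$K_i$ probability per excursion is $1 - \textup{cap}(K_i)/\textup{cap}(\bar B_i)+o(1)$, so after exponentiating the Poisson rate one recovers a factor $\exp(-u\,\textup{cap}(K_i))$; the factorization across $i$ uses that the total capacity $\textup{cap}(\bigcup_i \bar B_i)$ differs from $\sum_i \textup{cap}(\bar B_i)$ by at most $O(\sum_{i\neq j}(L_N/|x_i-x_j|_\infty)^{d-2})\to 0$ by (\ref{thm1}). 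Multiplying the factors then gives the claimed limit $\prod_i \exp(-u\,\textup{cap}(K_i))$. I expect the main obstacle to be making these two approximations quantitative with error terms that survive aggregation over the $\Theta(u\,s_N^{d-2})$ excursions into each $\bar B_i$; the cross-component independence estimate, via Green-function decay, is the step where hypothesis (\ref{thm1}) is used in an essential way.
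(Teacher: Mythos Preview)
Your reduction to $P[H_{A_N}>uN^d]\to\exp\bigl(-u\sum_i\textup{cap}(K_i)\bigr)$ is the same as the paper's. From there, however, you take a genuinely different route. The paper does \emph{not} decompose the walk into excursions or invoke any coupling with interlacement trajectories. Instead it uses the Aldous--Brown bound (Theorem~\ref{thm:exp}) to replace $P[\bar H_B>uN^d]$ by $\exp(-uN^d/E[H_B])$ up to an $O(N^2/E[H_B])$ error, reducing the whole problem to the single asymptotic $N^d/E[H_B]\to\sum_i\textup{cap}(K_i)$. This is then established by comparing the variational characterizations (\ref{eq:con'})--(\ref{eq:res'}) of $N^d/E[H_B]$ with the Dirichlet and Thomson principles (\ref{eq:con})--(\ref{eq:res}) for $\textup{cap}(\psi(B))$: a near-optimal compactly supported function gives the upper bound immediately, and the optimal unit flow on ${\mathbb Z}^d$ is restricted to the box and then corrected by a small flow (Proposition~\ref{pr:j}) redistributing the boundary charges uniformly, giving the lower bound.

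Your excursion/Poisson scheme is the strategy used in \cite{S08} for cylinders and in later work on tori, and in principle it can be pushed through here; but as you yourself note, the quantitative control of (i) the approximate independence of successive entry points via mixing, and (ii) the coupling of each excursion with a ${\mathbb Z}^d$ walk, must be good enough to survive summation over $\Theta(u\,s_N^{d-2})$ excursions, and this is substantial work that your proposal only gestures at. The paper's approach sidesteps all of this: no intermediate scales $s_N,L_N$, no excursion count, no coupling, no Poisson limit---just one spectral-gap input (hidden in Theorem~\ref{thm:exp}) and two variational comparisons. What your approach would buy, if carried out, is more: it would identify the \emph{local trace} of the walk with the interlacement trajectories, not merely the vacant indicator, and it generalizes more readily to other graphs. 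For the statement at hand, though, the paper's argument is markedly shorter and avoids precisely the obstacle you flag.
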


We now make some comments on the proof of Theorem~\ref{thm:main}. Standard arguments show that it suffices to show convergence of probabilities of the form $P\left[ H_B > uN^d \right]$ with $B=\bigcup_{i=1}^M (x_i + K_i)$  and finite subsets $K_i$ of ${\mathbb Z}^d$, where $H_B$ denotes the time until the first visit to the set $B \subseteq {\mathbb T}$ by the random walk. Since the size of the set $B$ does not depend on $N$, it is only rarely visited by the random walk for large $N$. It is therefore natural to expect that $H_B$ should be approximately exponentially distributed, see Aldous \cite{A89}, B2, p.~24. This idea is formalized by Theorem~\ref{thm:exp} below, quoted from Aldous and Brown \cite{AB}. Assuming that the distribution of $H_B$ is well approximated by the exponential distribution with expectation $E[H_B]$, the probability $P[H_B > uN^d]$ is approximately equal to $\exp \{ - uN^d/E[H_B] \}$. In order to show that this expression tends to the desired limit, which by (\ref{eq:int}) and (\ref{thm2}) is given by $\prod_{i=1}^M \exp \{ -u \textup{ cap}(K_i) \}$, one has to show that $N^d/E[H_B]$ tends to $\sum_{i=1}^M \textup{ cap}(K_i)$.

\vspace{12pt}
This task is accomplished with the help of the variational characterizations of the capacity of finite subsets of ${\mathbb Z}^d$ given by the Dirichlet and Thomson principles, see~(\ref{eq:con}) and (\ref{eq:res}). These principles characterize the capacity of a finite subset $A$ of ${\mathbb Z}^d$ as the infimum over all Dirichlet forms of functions of finite support on ${\mathbb Z}^d$ taking the value $1$ on $A$, resp. as the supremum over the reciprocal of energies dissipated by unit flows from $A$ to infinity. Aldous and Fill \cite{AF} show that very similar variational characterizations involving functions and flows on $\mathbb T$ hold for the quantity $N^d / E[H_A]$, see (\ref{eq:con'}) and (\ref{eq:res'}) below. In these two variational characterizations one optimizes the same quantities as in the Dirichlet and Thomson principles, over functions on the torus of zero mean, respectively over unit flows on the torus from $A$ to the uniform distribution. In the proof, we compare these two variational problems with the corresponding Dirichlet and Thomson principles and thus show the coincidence of $\lim_N N^d/E[H_B]$ with $\sum_{i=1}^M \textup{cap}(K_i)$. To achieve this goal, we construct a nearly optimal test function and a nearly optimal test flow for the variational problems on $\mathbb T$ using nearly optimal functions and a nearly optimal flow for the corresponding Dirichlet and Thomson principles.

In the case of the Dirichlet principle, this construction is rather simple and only involves shifting the functions on ${\mathbb Z}^d$ whose Dirichlet forms are almost $\textup{cap}(K_i)$ to the points $x_i$ on the torus, adding and rescaling them. In the Thomson principle, we identify the torus with a box in ${\mathbb Z}^d$ and consider the unit flow from $B$ to infinity on ${\mathbb Z}^d$ with dissipated energy equal to $\textup{cap}(B)^{-1}$. To obtain a flow on ${\mathbb T}$, we first restrict the flow to the box. The resulting flow then leaves charges at the boundary. In order to obtain a nearly optimal flow from $B$ to the uniform distribution for the variational problem (\ref{eq:res'}) on the torus, these charges need to be redirected such that they become uniformly distributed on $\mathbb T$, with the help of an additional flow of small energy.

\vspace{12pt}
The article is organized as follows:

In section \ref{sec:pre}, we state the preliminary result on the approximate exponentiality of the distribution of $H_B$ and introduce the variational characterizations required.

In section \ref{sec:proof}, we prove Theorem~\ref{thm:main}.

\vspace{12pt}
Finally, we use the following convention concerning constants: Throughout the text, $c$ or $c'$ denote positive constants which only depend on the dimension $d$, with values changing from place to place. Dependence of constants on additional parameters appears in the notation. For example, $c(M)$ denotes a constant depending only on $d$ and $M$. 

\paragraph{Acknowledgments.} The author is grateful to Alain-Sol Sznitman for proposing the problem and for helpful advice.

\section{Preliminaries} \label{sec:pre}

In this section, we introduce some notation and results required for the proof of Theorem~\ref{thm:main}. We denote the $l_1$ and $l_\infty$-distances on $\mathbb T$ or ${\mathbb Z}^d$ by $|.|_1$ and $|.|_\infty$. For any points $x, x'$ in $\mathbb T$ or ${\mathbb Z}^d$, we write $x \sim x'$ if $x$ and $x'$ are neighbors with respect to the natural graph structure, i.e. if $|x-x'|_1 = 1$. For subsets $A$ and $B$ of ${\mathbb T}$ or ${\mathbb Z}^d$, we write $d(A,B)$ for their mutual distance induced by $|.|_\infty$, i.e.~$d(A,B) = \inf \{|x-x'|_\infty: x \in A, x' \in B\}$, $\textup{int} A = \{x \in A: x' \in A \textrm{ for all } x' \sim x \}$, as well as $\partial_{int} A$ for the interior boundary, i.e.~$\partial_{int} A = A \setminus \textup{int} A$, and $|A|$ for the number of points in $A$.

We obtain a continuous-time random walk $(X_{\eta_t})_{t \geq 0}$ by defining the Poisson process $(\eta_t)_{t \geq 0}$ of parameter 1, independent of $X$. We write $P^{{\mathbb Z}^d}$ for the law of the simple random walk on ${\mathbb Z}^d$ and also denote the corresponding canonical process on ${\mathbb Z}^d$ as $X_.$, which should not cause any confusion. For $t \geq 0$, the set of points visited by the random walk until time $[t]$ is denoted by $X_{[0,t]}$, i.e.~$X_{[0,t]} = \{X_0, X_1, \ldots, X_{[t]} \}$. For any subset $A$ of  ${\mathbb T}$ or of ${\mathbb Z}^d$, we define the discrete- and continuous-time entrance times $H_A$ and ${\bar H}_A$ as
\begin{align}
H_A = \inf \{n \geq 0: X_n \in A \} \quad \textrm{and} \quad {\bar H}_A = \inf \{t \geq 0: X_{\eta_t} \in A \},  \label{def:ent}
\end{align}
as well as the hitting time
\begin{align}
{\tilde H}_A = \inf \{n \geq 1: X_n \in A \}.  \label{def:hit}
\end{align}
Note that by independence of $X$ and $\eta$, one then has
\begin{align}
E[{\bar H}_A] &= \sum_{n=0}^\infty P[H_A=n] E[\inf \{ t \geq 0: \eta_t = n \} ] = \sum_{n=0}^\infty P[H_A=n]n= E[H_A]. \label{eq:expent} 
\end{align}
The Green function of the simple random walk on ${\mathbb Z}^d$ is defined as
\begin{align}
g(x,x') = E_x^{{\mathbb Z}^d} \biggl[ \sum_{n=0}^\infty 1\{X_n = x'\} \biggr], \quad \textrm{for } x, x' \in {\mathbb Z}^d. \label{def:g}
\end{align}
In order to motivate the remaining definitions given in this section, we quote a result from Aldous and Brown \cite{AB}, which estimates the difference between the distribution of ${\bar H}_A$ and the exponential distribution. The following theorem appears as Theorem 1 in \cite{AB} for general irreducible, finite-state reversible continuous-time Markov chains and is stated here for the continuous-time random walk $(X_{\eta_t})_{t \geq 0}$ on $\mathbb T$, cf.~the remark after the statement.

\begin{theorem} \label{thm:exp}
\textup{($d \geq 1$)}
\newline
For any subset $A$ of $\mathbb T$ and $t \geq 0$,
\begin{align}
\left| P[{\bar H}_A > t E[H_A]] - \exp \{-t\} \right| \leq cN^2/E[H_A]. \label{eq:exp}
\end{align}
\end{theorem}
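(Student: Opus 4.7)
The plan is to derive this as a special case of the general Aldous--Brown estimate for reversible finite Markov chains, whose statement bounds the Kolmogorov distance between $\bar H_A/E[\bar H_A]$ and the standard exponential by a constant multiple of $t_{\textup{rel}}/E[\bar H_A]$, where $t_{\textup{rel}}$ is the relaxation time of the continuous-time chain (i.e.\ the reciprocal of the spectral gap of the generator). Since the continuous-time random walk $(X_{\eta_t})$ on $\mathbb T$ is reversible with respect to the uniform measure $\nu$, and since $E[{\bar H}_A] = E[H_A]$ by (\ref{eq:expent}), the bound (\ref{eq:exp}) reduces to the statement $t_{\textup{rel}} \leq cN^2$.

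To establish the relaxation time estimate, I would diagonalize the generator of $(X_{\eta_t})$ by Fourier analysis on $\mathbb T$. The characters $\chi_k(x) = \exp(2\pi i \, k\cdot x/N)$, indexed by $k \in \mathbb T$, form a complete eigenbasis, and a direct computation shows that $\chi_k$ is an eigenfunction with eigenvalue
\begin{equation*}
-\lambda_k = -\frac{1}{d}\sum_{j=1}^d \Bigl(1-\cos\bigl(2\pi k_j/N\bigr)\Bigr).
\end{equation*}
The spectral gap is therefore $\lambda_{\min} = \frac{1}{d}(1-\cos(2\pi/N))$, which satisfies $\lambda_{\min} \geq c/N^2$ for all $N$ large by a Taylor expansion of $\cos$ at the origin. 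This yields $t_{\textup{rel}} = 1/\lambda_{\min} \leq cN^2$, and combining with the general Aldous--Brown estimate completes the proof.

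The main obstacle is really just invoking the correct form of the Aldous--Brown theorem: the statement in \cite{AB} is phrased in terms of the relaxation time of an abstract reversible chain, so one only has to verify that the random walk on the torus fits their hypotheses (which it does trivially, being reversible and irreducible on a finite state space) and plug in the Fourier computation above. If one had to reprove Aldous--Brown from scratch, the heart of their argument is a coupling/stationarity comparison exploiting the fact that, started from $\nu$, the chain is already stationary, so the only source of non-memorylessness in $\bar H_A$ is the local correlation of the process with itself on the time scale $t_{\textup{rel}}$. The quotient $t_{\textup{rel}}/E[\bar H_A]$ then emerges as the natural small parameter controlling the deviation from exponentiality.
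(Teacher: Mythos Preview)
Your proposal is correct and follows essentially the same route as the paper. The paper likewise quotes Theorem~1 of Aldous--Brown \cite{AB}, replaces $E[\bar H_A]$ by $E[H_A]$ via (\ref{eq:expent}), and then computes the spectral gap of the torus walk as $\rho_1/d$ with $\rho_1 = 1-\cos(2\pi/N) \geq cN^{-2}$ (obtaining the one-dimensional gap from an explicit eigenvalue computation and then invoking the product-chain structure, cf.~\cite{SC}, Lemma~2.2.11), which is exactly your Fourier calculation carried out in slightly different packaging.
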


\begin{remark}
\textup{
In (\ref{eq:exp}), we have used (\ref{eq:expent}) to replace $E[{\bar H}_A]$ by $E[H_A]$, as well as the fact that the spectral gap of the transition matrix of the random walk $X$ on $\mathbb T$ is bounded from below by $cN^{-2}$. One of the many ways to show this last claim is to first find (by an explicit calculation of the eigenvalues, see, for example, \cite{AF}, Chapter 5, Example 7) that in dimension $d=1$, the spectral gap is given by $\rho_1= 1- \cos(2\pi/N) \geq cN^{-2}$. The $d$-dimensional random walk $X$ on $\mathbb T$ can be viewed as a $d$-fold product chain, from which it follows that its spectral gap is equal to $\rho_1 / d \geq cN^{-2}$, cf.~\cite{SC}, Lemma 2.2.11.}
\end{remark}

The main aim in the proof of Theorem \ref{thm:main} will be to obtain the limit as $N$ tends to infinity of probabilities of the form $P[{\bar H}_A> uN^d]$. In view of (\ref{eq:exp}), it is thus helpful to understand the asymptotic behavior of expected entrance times. To this end, we will use variational characterizations of expected entrance times involving Dirichlet forms and flows, which we now define.
For a real-valued function $f$ on $E={\mathbb T}$ or ${\mathbb Z}^d$, we define the Dirichlet form ${\mathcal E}_E$ as
\begin{align}
{\mathcal E}_E (f,f) = \frac{1}{2} \sum_{x \in E} \sum_{x' \sim x} \left( f(x) - f(x') \right)^2 \frac{1}{2d}. \label{def:dir}
\end{align}
We write $C_c$ for the set of real-valued functions on ${\mathbb Z}^d$ of finite support and denote the supremum norm of any function $f$ by $|f|_\infty$. The integral of a function $f$ on $\mathbb T$ with respect to the uniform distribution $\nu$ is denoted by $\nu(f)$ (i.e.~$\nu(f) = N^{-d} \sum_{x \in {\mathbb T}} f(x)$).
A flow $I = (I_{x,x'})$ on the edges of $E={\mathbb T}$ or ${\mathbb Z}^d$ is a real-valued function on $E^2$ satisfying
\begin{align}
I_{x,x'} = \left\{ \begin{array}{cl} -I_{x',x} & \textrm{if } x \sim x', \\
0 & \textrm{otherwise.} \end{array} \right. \label{def:flow}
\end{align}
Given a flow $I$, we write $|I|_\infty = \sup_{x,x' \in E} |I_{x,x'}|$ and define its dissipated energy as
\begin{align}
(I,I)_E = \frac{1}{2} \sum_{x \in E} \sum_{x' \in E} I_{x,x'}^2 2d. \label{def:energy}
\end{align}
The set of all flows on the edges of $E$ with finite energy is denoted by $F(E)$. For a flow $I \in F(E)$, the divergence $\textup{div} I$ on $E$ associates to every point in $E$ the net flow out of it,
\begin{align}
\textup{div} I (x)= \sum_{x' \sim x} I_{x,x'}, \quad x \in E. \label{def:div} 
\end{align}
The net flow out of a finite subset $A \subseteq E$ is denoted by
\begin{align}
I(A) = \sum_{x \in A} \sum_{x' \sim x} I_{x,x'} = \sum_{x \in A} \textup{div} I (x).
\end{align}
From Aldous and Fill, Chapter 3, Proposition 41, it is known that $N^d/E[H_A]$ is given by the infimum over all Dirichlet forms of functions on $\mathbb T$ of zero mean and equal to $1$ on $A$, and by the supremum over the reciprocals of energies dissipated by unit flows from $A$ to the uniform distribution $\nu$:
\begin{align}
N^d / E[H_A] &= \inf \bigl\{ {\mathcal E}_{\mathbb T}(f,f): f=1 \textrm{ on } A, \nu (f) =0 \bigr\} \label{eq:con'} \\
& = \sup \bigl\{ 1/(I,I)_{\mathbb T}: I \in F({\mathbb T}), I(A)= 1- |A| N^{-d}, \label{eq:res'} \\
& \qquad \qquad \textup{div} I(x) = - N^{-d} \textrm{ for all } x \in {\mathbb T} \setminus A \bigr\}. \nonumber
\end{align}
These variational characterizations are very similar to the Dirichlet and Thomson principles characterizing the capacity of finite subsets of ${\mathbb Z}^d$, to which we devote the remainder of this section. A set $A \subseteq {\mathbb Z}^d$ has its associated equilibrium measure $e_A$ on ${\mathbb Z}^d$, defined as 
\begin{align}
e_A(x) = \left\{ \begin{array}{ll} P^{{\mathbb Z}^d}_x[{\tilde H}_A = \infty] & \textrm{if } x \in A, \\
0 & \textrm{if } x \in {\mathbb Z}^d \setminus A. \end{array} \right.
\end{align}
The capacity of $A$ is defined as the total mass of $e_A$,
\begin{align}
\textup{cap}(A) = e_A ({\mathbb Z}^d). \label{def:cap}
\end{align}
For later use, we record that the following expression for the hitting probability of $A$ is obtained by conditioning on the time and location of the last visit to $A$ and applying the simple Markov property:
\begin{align}
P^{{\mathbb Z}^d}_x[H_A < \infty] = \sum_{x' \in A} g(x,x') e_A(x'), \quad \textrm{for } x \in {\mathbb Z}^d. \label{eq:heq}
\end{align}
The Dirichlet and Thomson principles assert that $\textup{cap}(A)$ is obtained by minimizing the Dirichlet form over all functions on ${\mathbb Z}^d$ of compact support equal to $1$ on $A$, or by maximizing the reciprocal of the energy dissipated by so-called unit flows from $A$ to infinity: 

\begin{proposition} \label{thm:di}
\textup{($d \geq 3$, $A \subseteq {\mathbb Z}^d$, $|A| < \infty$)}
\begin{align}
\textup{ cap}(A) &= \inf \left\{{\mathcal E}_{{\mathbb Z}^d}(f,f): f \in C_c, \textrm{ } f = 1 \textrm{ on } A \right\} \label{eq:con} \\
&= \sup \{1/(I,I)_{{\mathbb Z}^d}: I \in F({\mathbb Z}^d), I(A)=1, \textup{div} I(x) = 0, \textrm{ for all } x \in {\mathbb Z}^d \setminus A \}. \label{eq:res} 
\end{align}
Moreover, the unique maximizing flow $I^A$ in the variational problem (\ref{eq:res}) satisfies
\begin{align}
I^A_{x,x'} = - (2d \textup{ cap}(A))^{-1} ( P^{{\mathbb Z}^d}_{x'}[H_A < \infty] - P^{{\mathbb Z}^d}_x [H_A < \infty] ), \quad x \sim x' \in {\mathbb Z}^d. \label{eq:maxflow} 
\end{align}
\end{proposition}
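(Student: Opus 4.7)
The plan is to use the harmonic function $h_A(x) = P_x^{{\mathbb Z}^d}[H_A < \infty]$, which by (\ref{eq:heq}) equals $\sum_{x' \in A} g(x,x') e_A(x')$, as the common extremizer of both variational problems. In $d \geq 3$, classical Green function bounds give $h_A(x) = O(|x|_\infty^{-(d-2)})$ and $|h_A(x) - h_A(x')| = O(|x|_\infty^{-(d-1)})$ for $x \sim x'$ with $|x|_\infty$ large, so $h_A$ has finite Dirichlet form and the flow $I^A$ of (\ref{eq:maxflow}) has finite dissipated energy; moreover $h_A$ can be approximated by a sequence $h_A^R \in C_c$, equal to $1$ on $A$ for $R$ large, with $\mathcal{E}_{{\mathbb Z}^d}(h_A - h_A^R, h_A - h_A^R) \to 0$. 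The basic computational tool is the discrete Green identity $\mathcal{E}_{{\mathbb Z}^d}(\varphi, \psi) = -\sum_x \varphi(x) L \psi(x)$, valid whenever $\varphi \in C_c$, where $L\psi(x) = (2d)^{-1} \sum_{y \sim x} (\psi(y) - \psi(x))$.

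For the Dirichlet principle, a one-step decomposition gives $-L h_A = e_A$ pointwise on ${\mathbb Z}^d$: harmonicity off $A$ is the simple Markov property, while for $x \in A$, $1 - Ph_A(x) = P_x^{{\mathbb Z}^d}[\tilde H_A = \infty] = e_A(x)$. Applying Green's identity to any $f \in C_c$ with $f \equiv 1$ on $A$ and to $h_A^R$ (with $R$ large enough that $h_A^R = h_A$ on the support of $f$ together with its neighborhood) yields $\mathcal{E}_{{\mathbb Z}^d}(f, h_A^R) = \sum_{x \in A} f(x) e_A(x) = \textup{cap}(A)$; passing to the limit $R \to \infty$ via Cauchy--Schwarz gives $\mathcal{E}_{{\mathbb Z}^d}(f, h_A) = \textup{cap}(A)$ and, specializing to $f = h_A^R$, also $\mathcal{E}_{{\mathbb Z}^d}(h_A, h_A) = \textup{cap}(A)$. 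Cauchy--Schwarz then produces $\textup{cap}(A) = \mathcal{E}_{{\mathbb Z}^d}(f, h_A) \leq \mathcal{E}_{{\mathbb Z}^d}(f,f)^{1/2} \textup{cap}(A)^{1/2}$, hence $\mathcal{E}_{{\mathbb Z}^d}(f,f) \geq \textup{cap}(A)$; the sequence $h_A^R$ saturates this bound, which establishes (\ref{eq:con}).

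For the Thomson principle I first verify that $I^A$ is admissible: antisymmetry is built into (\ref{eq:maxflow}), and $\textup{div } I^A(x) = -\textup{cap}(A)^{-1} L h_A(x) = \textup{cap}(A)^{-1} e_A(x)$, which vanishes off $A$ and sums to $1$ over $A$. Direct expansion together with $\mathcal{E}_{{\mathbb Z}^d}(h_A, h_A) = \textup{cap}(A)$ gives $(I^A, I^A)_{{\mathbb Z}^d} = \textup{cap}(A)^{-1}$. For any competing admissible flow $I$, let $J = I - I^A$; then $J \in F({\mathbb Z}^d)$ has zero divergence off $A$ and satisfies $J(A) = 0$. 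Substituting (\ref{eq:maxflow}) into the bilinear form and integrating by parts (after approximating $h_A$ by the truncations $h_A^R$) gives
\begin{align*}
(I^A, J)_{{\mathbb Z}^d} = \textup{cap}(A)^{-1} \sum_{x \in {\mathbb Z}^d} h_A(x) \, \textup{div } J(x) = \textup{cap}(A)^{-1} J(A) = 0,
\end{align*}
the middle step using that $\textup{div } J$ is supported on the finite set $A$ with $h_A \equiv 1$ there. Therefore $(I, I)_{{\mathbb Z}^d} = (I^A, I^A)_{{\mathbb Z}^d} + (J, J)_{{\mathbb Z}^d} \geq \textup{cap}(A)^{-1}$, with equality only when $J \equiv 0$, yielding both the identity (\ref{eq:res}) and the uniqueness of the maximizer.

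The principal technical obstacle is the rigorous justification of the various integration-by-parts steps involving $h_A$, which is not square-summable on ${\mathbb Z}^d$ when $d \in \{3, 4\}$. The remedy is to phrase every identity in ``gradient form'' $\sum_{x \sim x'} (h_A(x) - h_A(x')) (\cdots)$, whose absolute summability against any finite-energy flow follows from Cauchy--Schwarz combined with $\mathcal{E}_{{\mathbb Z}^d}(h_A, h_A) < \infty$; each such identity is first verified for the compactly supported $h_A^R$, where all sums are finite, and then extended to $h_A$ by letting $R \to \infty$.
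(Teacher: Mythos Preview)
Your argument is correct. The use of the equilibrium potential $h_A$ as the common extremizer, the computation $-Lh_A = e_A$, the Cauchy--Schwarz step for the Dirichlet lower bound, and the orthogonality $(I^A,J)_{{\mathbb Z}^d}=0$ for the Thomson principle are all sound; your handling of the tail issues via the truncations $h_A^R$ and the gradient-form Cauchy--Schwarz estimate is exactly what is needed to make the integrations by parts rigorous.

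It is worth noting that your approach is quite different from the paper's. The paper does not argue directly: it first collapses the finite set $A$ to a single vertex $a$ (so that one is reduced to a single-point capacity problem on a transient network), and then simply invokes the general network-theoretic results of Soardi's monograph---Theorem~3.41 for the Dirichlet principle and Theorem~3.25 together with Corollary~2.14 for the Thomson principle and the identification of the optimal current. Your route is self-contained and makes the role of the equilibrium potential and the orthogonal decomposition $I = I^A + J$ completely explicit, at the cost of carrying out the truncation/limit arguments by hand; the paper's route is shorter but relies on external machinery and on the collapsing trick to avoid dealing with a nontrivial source set.
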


\begin{proof}
By collapsing the set $A$ to a point (see for example \cite{AF}, Chapter 2, Section 7.3), it suffices to consider a general transient graph $G$ instead of ${\mathbb Z}^d$ and $A = \{a\}$, for a vertex $a$ in $G$. The proof for this case can be found in \cite{S94}: Theorem 3.41 shows (\ref{eq:con}) above and Theorem 3.25 with $\iota = 1_{\{a\}}$ (in the notation of \cite{S94}; allowed by Theorem 3.30 and transience of the simple random walk in dimension $d \geq 3$), combined with Corollary 2.14, yields the above claims (\ref{eq:res}) and (\ref{eq:maxflow}).
\end{proof}

\section{Proof} \label{sec:proof}

With the results of the last section, we are now ready to give the proof of Theorem~\ref{thm:main}. 

\begin{proof}[Proof of Theorem \ref{thm:main}.]
Take any finite subsets $K_1, \ldots K_M$ of ${\mathbb Z}^d$ and, using the notations of the theorem, set $B = \bigcup_{i=1}^M (x_i + K_i)$. Note that the collection of events $\{\omega(x) = 1 \textrm{ for all } x \in K\}$ as $K$ varies over finite subsets of ${\mathbb Z}^d$ forms a $\pi$-system generating the canonical product $\sigma$-algebra on $\{0,1\}^{{\mathbb Z}^{d}}$. By compactness of the set of probability measures on $(\{0,1\}^{{\mathbb Z}^{d}})^M$, our claim will follow once we show that
\begin{align}
\lim_N P[H_B>uN^d] = \prod_{i=1}^M e^{-u \textup{ cap}(K_i)}. \label{eq:thm1} 
\end{align}
As we now explain, we can replace $H_B$ by its continuous-time analog ${\bar H}_B$ in (\ref{eq:thm1}). Indeed, assume (\ref{eq:thm1}) holds with $H_B$ replaced by ${\bar H}_B$. By the law of large numbers, one has $\eta_t/t \to 1$ $a.s.$, as $t$ tends to infinity (see, for example \cite{durrett}, Chapter 1, Theorem 7.3), and it then follows that, for $0<\epsilon <u$,
\begin{align*}
\limsup_{N} P[H_B>uN^d] &= \limsup_N P[X_{[0,uN^d]} \cap B = \emptyset] \\
&\leq \limsup_N P[X_{[0,\eta_{(u-\epsilon)N^d}]} \cap B = \emptyset] \\
&= \limsup_N P[{\bar H}_B > (u-\epsilon) N^d] = \prod_{i=1}^M e^{-(u-\epsilon) \textup{ cap}(K_i)},
\end{align*}
and similarly,
\begin{align*}
\liminf_N P[H_B>uN^d] &\geq \liminf_N P[X_{[0,\eta_{(u+\epsilon)N^d}]} \cap B = \emptyset] \\
&= \liminf_N P[{\bar H}_B > (u+\epsilon)N^d] = \prod_{i=1}^M e^{-(u+\epsilon) \textup{ cap}(K_i)}.
\end{align*}
Letting $\epsilon$ tend to $0$ in the last two estimates, one deduces the desired result. By the above observations and (\ref{eq:exp}) with $A=B$ and $t=uN^d/E[H_B]$, all that is left to prove is that
\begin{align}
\lim_N \frac{N^d}{E[H_B]} = \sum_{i=1}^M \textup{cap}(K_i). \label{eq:thm3}
\end{align}
The claim (\ref{eq:thm3}) will be shown by using the variational characterizations (\ref{eq:con'}), (\ref{eq:res'}), (\ref{eq:con}) and (\ref{eq:res}). To this end, we map the torus $\mathbb T$ to a subset of ${\mathbb Z}^d$ in the following way:  We choose a point $x_*$ in $\mathbb T$ as the origin and then define the bijection $\psi: {\mathbb T} \to {\mathbb T}' = \{0, \ldots, N-1\}^d$ such that $\pi_{\mathbb T}(\psi(x_*+x)) = x$ for $x \in {\mathbb T}$, where $\pi_{\mathbb T}$ denotes the canonical projection from ${\mathbb Z}^d$ onto $\mathbb T$. Since there are only $M$ points $x_i$, we can choose $x_*$ in such a way that in ${\mathbb T}' \subseteq {\mathbb Z}^d$, $\psi(B)$ remains at a distance of order $N$ from the interior boundary of ${\mathbb T}'$, i.e.~such that for $N \geq c(M)$, 
\begin{align}
d(\psi(B), \partial_{int} {{\mathbb T}'}) \geq c'(M) N. \label{eq:thm5.1}
\end{align}
We define the subsets $C$ and $S$ of $\mathbb T$ as the preimages of $\textup{int} {\mathbb T}'$ and $\partial_{int} {\mathbb T}'$ under $\psi$, i.e.
\begin{align}
C= \psi^{-1}( \textup{int} {\mathbb T}'), \quad \textrm{and} \quad S = \psi^{-1}(\partial_{int} {\mathbb T}'). \label{def:cs}
\end{align}
For $\epsilon >0$, we now consider functions $f_i \in C_c$ (see above (\ref{def:flow})) such that $f_i = 1$ on $K_i$ and
\begin{align}
{\mathcal E}_{{\mathbb Z}^d} (f_i,f_i) \leq \textup{ cap}(K_i) + \epsilon, \textrm{ for } i=1, \ldots, M, \textrm{ cf.~(\ref{eq:con}).} \label{eq:thm4}
\end{align}
Defining $\tau_x: {\mathbb T} \to {\mathbb Z}^d$ by $\tau_x(x')= \psi(x')-\psi(x)$ for $x,x' \in {\mathbb T}$, we construct the function $f$ by shifting the functions $f_i$ to the points $x_i$, subtracting their means and rescaling so that $f$ equals $1$ on $B$ (for large $N$):
\begin{align*}
f = \frac{\sum_{i=1}^M f_i \circ \tau_{x_i} - \nu \left( \sum_{i=1}^M f_i \circ \tau_{x_i} \right)} {1-\nu \left( \sum_{i=1}^M f_i \circ \tau_{x_i} \right)}.
\end{align*}
Note that by the hypothesis (\ref{thm1}) and our choice (\ref{eq:thm5.1}) of the origin, the finite supports of the functions $f_i(. - \psi(x_i))$ intersect neither each other nor $\partial_{int} {\mathbb T}'$ for $N \geq c(M)$. One can then easily check that for $N \geq c(M, \epsilon)$ we have $f=1$ on $B$ and $\nu(f) = 0$. It therefore follows from (\ref{eq:con'}) that
\begin{align*}
\limsup_N N^d/E[H_B] &\leq \limsup_N {\mathcal E}_{\mathbb T} (f,f) \\
&\stackrel{(f_i \in C_c, (\ref{thm1}))}{=} 
\sum_{i=1}^M {\mathcal E}_{{\mathbb Z}^d} (f_i,f_i) \stackrel{(\ref{eq:thm4})}{\leq} \sum_{i=1}^M \textup{cap}(K_i) + M\epsilon. 
\end{align*}
Letting $\epsilon$ tend to $0$, one deduces that
\begin{align}
\limsup_N N^d/E[H_B] \leq \sum_{i=1}^M \textup{cap}(K_i). \label{eq:thm5}
\end{align}
In order to show the other half of (\ref{eq:thm3}), we proceed similarly, with the help of the variational characterizations (\ref{eq:res'}) and (\ref{eq:res}). We consider the flow $I^{\psi(B)} \in F({\mathbb Z}^d)$ such that 
\begin{align}
&I^{\psi(B)}( \psi(B))=1, \label{eq:thm5.2} \\
& \textup{div} I^{\psi(B)}(z) = 0 \textrm{ for all $z \in {\mathbb Z}^d \setminus \psi(B)$, and } \label{eq:thm5.3} \\
&1/(I^{\psi(B)},I^{\psi(B)})_{{\mathbb Z}^d} = \textup{ cap}(\psi(B)), \textrm{ cf.~(\ref{eq:res}), (\ref{eq:maxflow}).} \label{eq:thm6}
\end{align}
The aim is to now construct a flow of similar total energy satisfying the conditions imposed in (\ref{eq:res'}). To this end, we first define the flow $I^{*} \in F({\mathbb T})$ by restricting the flow $I^{\psi(B)}$ to ${\mathbb T}'$, i.e.~we set
\begin{align}
I^{*}_{x,x'} = I^{\psi(B)}_{\psi(x),\psi(x')} \textrm{ for } x,x' \in {\mathbb T}. \label{eq:thm6.0}
\end{align}
We now need a flow $J \in F({\mathbb T})$ such that $I^{*} + J$ is a unit flow from $A$ to the uniform distribution on $\mathbb T$. Essentially, $J$ has to redirect some of the charges $(\textup{div} I^*) 1_{S}$ left by $I^{*}$ on the set $S$, such that these charges become uniformly distributed on the torus, and the energy dissipated by $J$ has to decay as $N$ tends to infinity. The following proposition yields the required flow $J$:
\begin{proposition} \label{pr:j}
\textup{($d \geq 1$)}
\newline
There is a flow $J \in F({\mathbb T})$ such that
\begin{align}
& \textup{div} J(x) + (\textup{div} I^{*}) 1_{S}(x) = - N^{-d}, \textrm{ for any $x \in {\mathbb T}$, and} \label{eq:j1} \\
& |J|_\infty \leq c(M) N^{1-d}. \label{eq:j2}
\end{align}
\end{proposition}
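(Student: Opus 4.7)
The plan is to prove Proposition \ref{pr:j} in two main steps: first, to establish a pointwise bound $|\textup{div}\,I^*(x)| \leq c(M) N^{1-d}$ for every $x \in S$, and second, to construct $J$ explicitly using this bound. The mass-balance condition $\sum_{x \in \mathbb{T}} \textup{div}\,J(x) = 0$ that any flow must satisfy is consistent with (\ref{eq:j1}): since $I^{\psi(B)}$ is divergence-free off $\psi(B)$, $B \cap S = \emptyset$ for large $N$ by (\ref{eq:thm5.1}), and $\sum_{x \in B} \textup{div}\,I^*(x) = I^{\psi(B)}(\psi(B)) = 1$ by (\ref{eq:thm5.2}), antisymmetry of $I^*$ gives $\sum_{x \in S} \textup{div}\,I^*(x) = -1$, matching the summed right-hand side $-N^{-d} \cdot |\mathbb{T}| = -1$.

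For the first step, I would combine (\ref{eq:maxflow}) with (\ref{eq:heq}) to write $I^{\psi(B)}_{z, z+e}$ as $(2d\,\textup{cap}(\psi(B)))^{-1}$ times a discrete gradient (in $z$) of $\sum_{x' \in \psi(B)} g(z, x')\,e_{\psi(B)}(x')$. The standard pointwise estimate $|g(z, x') - g(z+e, x')| \leq c\,|z - x'|^{1-d}$ for $z \neq x'$ in $\mathbb{Z}^d$ with $d \geq 3$, combined with the uniform bound $\textup{cap}(\psi(B)) \leq c(M)$ (finite union of $M$ translates of bounded sets), yields $|I^{\psi(B)}_{z, z+e}| \leq c(M)\,d(z, \psi(B))^{1-d}$. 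By (\ref{eq:thm5.1}) this gives $|I^*_{x, x'}| \leq c(M) N^{1-d}$ for all $x \in S$ and $x' \sim x$, and summing over the at most $2d$ neighbors yields the claimed bound on $|\textup{div}\,I^*(x)|$.

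For the second step, I would construct $J$ by perpendicular-column transport. Decompose $S$ (via $\psi^{-1}$) into the $2d$ faces of $\partial_{int}\,\mathbb{T}'$, assign each $x \in S$ uniquely to one of these faces based on its extremal coordinates, and for each such $x$ route the charge $-\textup{div}\,I^*(x)$ along the closed column of length $N$ through $x$ in the corresponding perpendicular direction on $\mathbb{T}$, absorbing mass uniformly along the column. Because the $N^{d-1}$ columns in a given direction are edge-disjoint, the column flow is bounded in $l^\infty$ by $\max_{x \in S} |\textup{div}\,I^*(x)| \leq c(M) N^{1-d}$.

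The main obstacle is that this column-wise uniform absorption yields a rate of $-\textup{div}\,I^*(x)/N$ at each column vertex, whereas (\ref{eq:j1}) requires the uniform rate $-N^{-d}$ on every vertex of $\mathbb{T} \setminus S$; both quantities are of order $N^{-d}$ but generally differ pointwise. To reconcile them I would add a corrective flow that equidistributes the column absorptions within each $(d-1)$-dimensional slab of parallel columns, exploiting the tangential regularity of $\textup{div}\,I^*$ along $S$ (neighbors differ by at most $c(M) N^{-d}$, as inherited from second-order pointwise bounds on the lattice Green function). Because the corrections are local within slabs, they contribute at most $c(M) N^{1-d}$ per edge. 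Careful bookkeeping is required at the corner vertices of $\partial_{int}\,\mathbb{T}'$ where several column directions meet and the face assignment of a vertex is not canonical; this is the most delicate part of the construction, but by (\ref{eq:thm5.1}) the corner set has size $O(N^{d-2})$ and its contribution is negligible for the $l^\infty$ bound.
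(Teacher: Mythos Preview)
Your first step (the bound $|\textup{div}\,I^*|\le c(M)N^{1-d}$ on $S$) and your column-transport flow coincide with the paper's estimate (3.14) and its flow $K$. One small slip: in the first step you invoke $\textup{cap}(\psi(B))\le c(M)$, but since $\textup{cap}(\psi(B))^{-1}$ multiplies the gradient in (\ref{eq:maxflow}) you actually need a \emph{lower} bound on the capacity; this is of course available (e.g.\ $\textup{cap}(\psi(B))\ge \textup{cap}(\{0\})>0$), but the direction as written is wrong.

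The genuine gap is in the correction step. After column transport the residual at a vertex is $-\textup{div}\,I^*(x)/N-(-N^{-d})$, a function of size $O(N^{-d})$ on $\mathbb T$. You propose to remove it by ``local'' corrections within $(d-1)$-dimensional slabs, invoking second-order Green-function bounds to get tangential Lipschitz control of order $N^{-d}$ on $\textup{div}\,I^*$. But smoothness does not make the redistribution local: a smooth mean-zero function such as $A\cos(2\pi x_1/N)$ still requires a flow of magnitude $\sim AN$ to equidistribute. What your slab-wise scheme really needs is a statement of the form ``given $h$ on an $(d-1)$-torus there is a flow $L$ with $\textup{div}\,L+h=\nu(h)$ and $|L|_\infty\le cN|h|_\infty$'', applied with $|h|_\infty\le c(M)N^{-d}$. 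That is exactly the paper's Lemma~\ref{lem:f} in one lower dimension, proved by induction on $d$; the second-order regularity plays no role. Moreover, equidistributing within each slab perpendicular to one direction gives the slab-average, not $-N^{-d}$; you still have to reconcile the contributions coming from the several column directions (and from both endpoints of each column), which your sketch does not address beyond the remark that corners need ``careful bookkeeping''. The paper sidesteps all of this by applying Lemma~\ref{lem:f} directly on $\mathbb T$ to $h=\textup{div}\,K+(\textup{div}\,I^*)1_S$, which has $|h|_\infty\le c(M)N^{-d}$, and setting $J=K+L^h$; no Green-function second differences and no face/corner combinatorics are needed.
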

Before we prove Proposition~\ref{pr:j}, we show how it enables to complete the proof of Theorem~\ref{thm:main}. Let us check that for large $N$, the flow $I^{*} + J$ satisfies the hypotheses imposed in (\ref{eq:res'}) with $A=B$. Since by (\ref{eq:thm5.1}), $\psi (B)$ is contained in $\textup{int} {\mathbb T}'$ for $N \geq c(M)$, one has for such $N$,
\begin{align*}
(I^{*}+J)(B) \stackrel{(\ref{eq:thm6.0})}{=} I^{\psi(B)}(\psi(B)) + J(B) \stackrel{(\ref{eq:thm5.2}), (\ref{eq:j1})}{=} 1 - |B|N^{-d}.
\end{align*}
Moreover, for any $N \geq c(M)$ and $x \in {\mathbb T} \setminus B$, 
\begin{align*}
\textup{div} (I^{*} + J)(x) &\stackrel{(\ref{eq:thm6.0})}{=} (\textup{div} I^{\psi(B)}) 1_{\textup{int} {\mathbb T}'} (\psi(x)) + (\textup{div} I^{*})1_{S}(x) + \textup{div} J(x) \\
&\stackrel{(\ref{eq:thm5.3}), (\ref{eq:j1})}{=} - N^{-d}.
\end{align*}
The flow $I^{*} + J$ is hence included in the collection on the right-hand side of (\ref{eq:res'}) with $A=B$ and it follows with the Minkowski inequality that
\begin{align}
E[H_B]N^{-d} &\leq (I^{*}+J,I^{*}+J)_{\mathbb T} \leq \left( (I^{*},I^{*})_{\mathbb T}^{\frac{1}{2}} + (J,J)_{\mathbb T}^{\frac{1}{2}} \right)^2. \label{eq:thm10}
\end{align}
By the bound (\ref{eq:j2}) on $|J|_\infty$, one has $(J,J)_{\mathbb T} \leq c(M) (N^{1-d})^2 N^d = c(M) N^{2-d}$. 
Inserting this estimate together with $$(I^{*},I^{*})_{\mathbb T} \stackrel{(\ref{eq:thm6.0})}{\leq} (I^{\psi(B)},I^{\psi(B)})_{{\mathbb Z}^d} \stackrel{(\ref{eq:thm6})}{=} 1/ \textup{cap}(\psi(B))$$ into (\ref{eq:thm10}), we deduce that
\begin{align}
E[H_B] N^{-d} &\leq \left( \textup{cap}(\psi(B))^{-\frac{1}{2}} + c(M) N^{-(d-2)/2} \right)^2. \label{eq:thm11}
\end{align}
Finally, we claim that
\begin{align}
\lim_N \textup{cap}(\psi(B)) = \sum_{i=1}^M \textup{cap}(K_i). \label{eq:thm12}
\end{align}
Indeed, the standard Green function estimate from \cite{lawler}, p.~31, (1.35) implies that for $d \geq 3$, $$P^{{\mathbb Z}^d}_x [H_{x'} < \infty] \leq g(x,x') \leq c|x-x'|_\infty^{2-d}, \quad x, x' \in {\mathbb Z}^d,$$ and claim (\ref{eq:thm12}) follows by assumption (\ref{thm1}) and the definition (\ref{def:cap}) of the capacity. Combining (\ref{eq:thm11}) with (\ref{eq:thm12}), one infers that for $d \geq 3$,
\begin{align*}
\limsup_N E[H_B] N^{-d} &\leq \biggl(\sum_{i=1}^M \textup{cap}(K_i) \biggr)^{-1}.
\end{align*}
Together with (\ref{eq:thm5}), this shows (\ref{eq:thm3}) and therefore completes the proof of Theorem~\ref{thm:main}. 
\end{proof}

It only remains to prove Proposition~\ref{pr:j}.
\begin{proof}[Proof of Proposition~\ref{pr:j}.]
The task is to construct a flow $J$ distributing the charges \\ $(\textup{div}I^*)1_{S}$ uniformly on $\mathbb T$, observing that we want the estimate (\ref{eq:j2}) to hold. To this end, we begin with an estimate on the order of magnitude of $\textup{div} I^*(x)$, for $x \in S$ and $N \geq c(M)$, where we sum over all neighbors $z$ of $\psi(x)$ in ${\mathbb Z}^d \setminus {\mathbb T}'$:
\begin{align}
&\bigl|\textup{div} I^{*}(x) \bigr| \stackrel{(\ref{eq:thm6.0})}{=} \biggl|\textup{div} I^{\psi(B)}(\psi(x)) - \sum_{z} I^{\psi(B)}_{\psi(x),z} \biggr| \stackrel{(\ref{eq:thm5.1}), (\ref{eq:thm5.3})}{\leq} \sum_{z} \bigl| I^{\psi(B)}_{\psi(x),z} \bigr| \label{eq:j5}\\
&\quad \stackrel{(\ref{eq:maxflow})}{\leq} c \sum_{z} \textup{ cap}(\psi(B))^{-1} \left| P_{z}^{{\mathbb Z}^d} [H_{\psi(B)}<\infty] - P^{{\mathbb Z}^d}_{\psi(x)}[H_{\psi(B)}<\infty] \right| \nonumber\\  
&\quad \stackrel{(\ref{eq:heq})}{\leq} c(M) N^{1-d}, \quad \textrm{for } x \in S, \nonumber
\end{align}
where we have also used the estimate on the Green function of \cite{lawler}, Theorem~1.5.4, together with (\ref{eq:thm5.1}), for the last line.
The required redirecting flow $J$ will be constructed as the sum of two flows, $K$ and $L$, both of which satisfy the estimate (\ref{eq:j2}). The purpose of $K$ is to redirect the charges $(\textup{div}I^*)1_{S}$, in such a way that the magnitude of the resulting charge at any given point is then bounded by $c(M)N^{-d}$, hence decreased by a factor of $N^{-1}$, cf.~(\ref{eq:j5}). Then, the flow $L$ will be used to distribute the resulting charges uniformly on $\mathbb T$. The existence of the flow $L$ will be a consequence of the following lemma (recall our convention concerning constants described at the end of the introduction and that $\nu$ denotes the uniform distribution on $\mathbb T$, cf.~above (\ref{def:flow})):
\begin{lemma} \label{lem:f}
\textup{($d \geq 1$)}
\newline
For any function $h: {\mathbb T} \to {\mathbb R}$, there is a flow $L^h \in F({\mathbb T})$, such that 
\begin{align}
&(\textup{div} L^h + h)(x) = \nu(h), \textrm{ for any } x \in {\mathbb T}, \textrm{ and} \label{f3} \\
&|L^h|_\infty \leq cN|h|_\infty. \label{f4}
\end{align}
\end{lemma}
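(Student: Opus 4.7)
The approach is to first solve the problem on a one-dimensional cycle and then reduce the $d$-dimensional problem to $d$ successive one-dimensional problems, one per coordinate direction. Replacing $h$ with $\tilde h = h - \nu(h)$, which has $\nu(\tilde h) = 0$ and $|\tilde h|_\infty \leq 2|h|_\infty$, it suffices to produce a flow $L^h \in F(\mathbb T)$ with $\textup{div}\, L^h = -\tilde h$ and the norm bound.

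\emph{One-dimensional case.} On a single cycle $\mathbb Z / N\mathbb Z$, for any function $g$ with $\sum_j g(j) = 0$, the discrete antiderivative $I_{k,k+1} = -\sum_{j=0}^{k} g(j) + C$ (with $C$ chosen, e.g., as the median of the partial sums) is a well-defined flow on the cycle with $\textup{div}\, I = -g$ and $|I|_\infty \leq N|g|_\infty$. This is the only genuine analytic input.

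\emph{Iteration.} For $k = 1, \ldots, d$, I will inductively construct a flow $L_k \in F(\mathbb T)$ whose edges all lie in direction $k$ and whose values depend only on the coordinates $(x_k, \ldots, x_d)$, such that
\begin{align*}
\textup{div}\, L_k \;=\; h^{(k)} - h^{(k-1)},
\end{align*}
where $h^{(0)} = \tilde h$ and for $k \geq 1$,
\begin{align*}
h^{(k)}(x_{k+1}, \ldots, x_d) \;=\; \frac{1}{N} \sum_{x_k = 0}^{N-1} h^{(k-1)}(x_k, x_{k+1}, \ldots, x_d),
\end{align*}
viewed as a function on $\mathbb T$ that is constant in $(x_1, \ldots, x_k)$. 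Indeed, for each fixed $(x_{k+1}, \ldots, x_d)$ the restriction $g := h^{(k-1)}(\,\cdot\,, x_{k+1}, \ldots, x_d) - h^{(k)}(x_{k+1}, \ldots, x_d)$ is a mean-zero function on the cycle in direction $k$, so the one-dimensional case supplies a flow on that cycle with divergence $-g$ and $\ell_\infty$-norm at most $N |h^{(k-1)}|_\infty \leq N |\tilde h|_\infty$. Taking the \emph{same} such flow on each of the $N^{k-1}$ direction-$k$ cycles indexed by the already-processed coordinates $(x_1, \ldots, x_{k-1})$ produces $L_k$ with the required structure and $|L_k|_\infty \leq cN|h|_\infty$.

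\emph{Conclusion.} Telescoping, $\textup{div}(L_1 + \cdots + L_d) = h^{(d)} - h^{(0)} = -\tilde h$, because $h^{(d)}$ is a constant equal to the average of $\tilde h$ over $\mathbb T$, namely $0$. Setting $L^h = L_1 + \cdots + L_d$ verifies (\ref{f3}), while $|L^h|_\infty \leq \sum_k |L_k|_\infty \leq c\,d\,N|h|_\infty \leq cN|h|_\infty$ by the convention on constants, giving (\ref{f4}). The only point requiring attention is the stipulation that $L_k$ be taken independent of $(x_1, \ldots, x_{k-1})$: this guarantees that $\textup{div}\, L_k$ depends only on $(x_k, \ldots, x_d)$, so that adding $L_k$ removes the dependence on $x_k$ from the residual source without reintroducing dependence on coordinates already eliminated.
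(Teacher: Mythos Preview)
Your proof is correct and takes essentially the same approach as the paper's: both reduce the problem to successive one-dimensional constructions along each coordinate direction, using the explicit antiderivative flow on a cycle. The paper organizes this as an induction on $d$ (peel off the first coordinate via the one-dimensional case on direction-$1$ fibers, then apply the induction hypothesis on the $(d-1)$-dimensional slices), whereas you unroll this induction into an explicit iteration $k=1,\ldots,d$; the resulting flow is the same.
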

\begin{proof}[Proof of Lemma~\ref{lem:f}.]
We construct the flow $L^h$ by induction on the dimension $d$, and therefore write ${\mathbb T}_d$ rather than $\mathbb T$ throughout this proof. Furthermore, we denote the elements of ${\mathbb T}_d$ using the coordinates of ${\mathbb T}'_d$ as $\{ (i_1, \ldots, i_d): 0 \leq i_j \leq N-1 \}$.  

In order to treat the case $d=1$, define the flow $L^h$ by letting the charges defined by $h$ flow from $0$ to $N-1$, such that the same charge is left at any point. Precisely, we set $L^h_{N-1, 0} = 0$ and $L^h_{i,i+1} = \sum_{j=0}^i (\nu(h) - h(j))$ for $i = 0, \ldots, N-2$ (the values in the opposite directions being imposed by the condition (\ref{def:flow}) on a flow). The flow $L^h$ then has the required properties (\ref{f3}) and (\ref{f4}).

Assume now that $d \geq 2$ and that the statement of the lemma holds in any dimension $<d$. Applying the one-dimensional case on every fiber $\{(0,y), \ldots, (N-1,y)\} \cong {\mathbb T}_1$, $y \in {\mathbb T}_{d-1}$, with the function $h^1$ defined by $h^1(.,y) = h(.,y)$, one obtains the flows $L^y$ supported by the edges of $\{(0,y), \ldots, (N-1,y)\}$, such that for any $i \in {\mathbb T}_1$,
\begin{align}
&(\textup{div}L^y + h) (i,y) = N^{-1} \sum_{j=0}^{N-1} h(j,y) \textrm{ and} \label{eq:f7} \\
&|L^y|_\infty \leq cN|h|_\infty. \label{eq:f8}
\end{align}
We now apply the induction hypothesis on the slices ${\mathbb S}_i = \{(i,y): y \in {\mathbb T}_{d-1}\} \cong {\mathbb T}_{d-1}$, $i \in {\mathbb T}_1$, with the function $h^2$ given by $h^2(i,.) = N^{-1} \sum_{j=0}^{N-1} h(j,.)$. For any $0 \leq i \leq N-1$, we thus obtain a flow $L^i$ supported by the edges of ${\mathbb S}_i$, such that for any $y \in {\mathbb T}_{d-1}$,
\begin{align}
&\textup{div}L^i(i,y) + N^{-1} \sum_{j=0}^{N-1} h(j,y) = N^{-(d-1)} \sum_{y' \in {\mathbb T}_{d-1}} h^2(i,y') = \nu(h) \textrm{ and} \label{eq:f11} \\
&|L^i|_\infty \leq cN|h|_\infty. \label{eq:f12}
\end{align}
Then equations (\ref{eq:f7})-(\ref{eq:f12}) imply that the flow
$L^h = \sum_{i=0}^{N-1} L^i + \sum_{y \in {\mathbb T}_{d-1}} L^y $
has the required properties. Indeed, the flows $L^y$ have disjoint supports, as do the flows $L^i$, and therefore the estimate (\ref{f4}) on $|L^h|_\infty$ follows from (\ref{eq:f8}) and (\ref{eq:f12}). Finally, for any $x=(i,y) \in {\mathbb T}_1 \times {\mathbb T}_{d-1} = {\mathbb T}_d$, (\ref{eq:f7}) and (\ref{eq:f11}) together yield
\begin{align*}
(\textup{div}L^h + h)(x) = \textup{div}L^i(i,y) + \textup{div}L^y (i,y) + h(i,y) = \nu(h),
\end{align*}
hence (\ref{f3}). This concludes the proof of Lemma~\ref{lem:f}.
\end{proof}

\noindent We now complete the proof of Proposition~\ref{pr:j}. To this end, we construct the auxiliary flow $K$ described above Lemma~\ref{lem:f}. Set $g = (\textup{div} I^*) 1_S$. Writing $e_1, \ldots, e_d$ for the canonical basis of ${\mathbb R}^d$, choose a mapping $e: S \to \{\pm e_1, \ldots, \pm e_d\}$ such that ${\mathbb F}'_x \stackrel{(\textrm{def.})}{=} \{\psi(x), \psi(x) + e(x), \ldots, \psi(x)+(N-1)e(x) \} \subseteq {\mathbb T}'$ (whenever there are more than one possible choices for $e(x)$, take one among them arbitrarily), and define the fiber ${\mathbb F}_x = \psi^{-1} ({\mathbb F}'_x)$. 

Observe that any point $x \in \mathbb T$ only belongs to the $d$ different fibers $x + [0,N-1] e_i$, $i=1,\ldots,d$. Moreover, we claim that for any ${\mathbb F} \in \{{\mathbb F}_x\}_{x \in S}$, there are at most $2$ points $x \in S$ such that ${\mathbb F}_x = {\mathbb F}$. Indeed, suppose that ${\mathbb F}_x = {\mathbb F}_{x'}$ for $x, x' \in S$. Then $\psi({\mathbb F}_x) = \psi({\mathbb F}_{x'})$ implies that $\psi(x') = \psi(x) + ke(x)$ for some $k \in \{0, \ldots, N-1\}$ and that either $e(x) = e(x')$ or $e(x) = -e(x')$. If $e(x) = e(x')$, then for $\psi({\mathbb F}_{x'}) = \{ \psi(x) + ke(x), \psi(x) + (k+1)e(x), \ldots, \psi(x) + (k+N-1)e(x) \}$ to be a subset of ${\mathbb T}'$, we require $k=0$ (since $\psi(x) + Ne(x) \notin {\mathbb T}'$). Similarly, if $e(x) = -e(x')$ one needs $k=N-1$. Hence, $x'$ can only be equal to either $x$ or $x+(N-1)e(x)$. The above two observations on the fibers ${\mathbb F}_x$ together imply the crucial fact that any point in $\mathbb T$ belongs to a fiber ${\mathbb F}_x$ for at most $2d$ points $x \in S$.

We then define the flow $K^x$ from $x$ to $x+(N-1)e(x)$ distributing the charge $g(x)$ uniformly on the fiber ${\mathbb F}_{x}$. That is, the flow $K^x \in F({\mathbb T})$ is supported by the edges of ${\mathbb F}_{x}$, and characterized by $K^x_{x +(N-1)e(x), x} = 0$, $K^x_{x+i e(x), x+(i+1)e(x)} = -g(x) (N-(i+1))/N$ for $i = 0, \ldots, N-2$. Observe that then $|K^x|_\infty \leq |g|_\infty$ and $|\textup{div} K^x + g1_{\{x\}}|_\infty = |g(x)|/N \leq |g|_\infty/N$. Moreover, any point in $\mathbb T$ belongs to at most $2d$ fibers ${\mathbb F}_{x}$, hence to the support of at most $2d$ flows $K^x$. If we define the flow $K \in F({\mathbb T})$ as
$K = \sum_{x \in S} K^x,$
then we therefore have
\begin{align}
|K|_\infty \leq c \max_{x \in S} |K^x|_\infty \leq c |g|_\infty, \label{eq:f1}
\end{align}
as well as, for $x \in {\mathbb T}$,
\begin{align}
|(\textup{div} K + g)(x)| & \leq \sum_{x' \in S} |(\textup{div} K^{x'} + g1_{\{x'\}})(x)| \label{eq:f2} \\
& \leq |\textup{div} K^x + g1_{\{x\}}|_\infty 1_{S}(x) + \sum_{x' \neq x: x \in {\mathbb F}_{x'}} |\textup{div} K^{x'}(x)| \nonumber\\
& \leq c |g|_\infty/N. \nonumber
\end{align}
We claim that the flow $J = K+L^{\textup{div} K +g}$ has the required properties (\ref{eq:j1}) and (\ref{eq:j2}). Indeed, using the fact that $\nu (\textup{div} I) = 0$ for any flow $I \in F({\mathbb T})$,
\begin{align*}
\textup{div} J + g &= \textup{div}  L^{\textup{div} K +g} + \textup{div} K + g \\
&\stackrel{(\ref{f3})}{=} \nu (\textup{div} K + g) = \nu((\textup{div} I^*) 1_S) = - \nu( (\textup{div} I^*) 1_{C} ) \\
&\stackrel{(\ref{eq:thm6.0})}{=} -N^{-d} \sum_{z \in \textup{int} {\mathbb T}'} \textup{div} I^{\psi(B)}_z \stackrel{(\ref{eq:thm5.3})}{=} -N^{-d} I^{\psi(B)}(\psi(B)) \stackrel{(\ref{eq:thm5.2})}{=} -N^{-d}.
\end{align*}
Finally, the estimates (\ref{f4}), (\ref{eq:f1}) and (\ref{eq:f2}) imply that
\begin{align*}
|J|_\infty \leq |K|_\infty + |L^{\textup{div} K +g}|_\infty \leq c|g|_\infty \stackrel{(\ref{eq:j5})}{\leq} c(M) N^{1-d}.
\end{align*}
The proof of Proposition~\ref{pr:j} is thus complete.
\end{proof}

\end{document}